\documentclass[11pt]{article}
\evensidemargin=-0.in \oddsidemargin=-0.in \textwidth=5.9in
\topmargin= -0.2in \headheight=0.in \headsep=0.1in
\textheight=8.1in
\linespread{1.4}

\usepackage{graphicx}
\usepackage{amsmath}
\usepackage{amsfonts}
\usepackage{amssymb}

%
\newtheorem{lemma}{Lemma}
\newtheorem{theorem}{Theorem}

\newtheorem{definition}{Definition}

\def\QED{~\rule[-1pt]{5pt}{5pt}\par\medskip}
\newenvironment{proof}{{\bf Proof: \ }}{ \hfill \QED}

\newcommand{\be}{\begin{equation}}
\newcommand{\ee}{\end{equation}}
\newcommand{\bea}{\begin{eqnarray}}
\newcommand{\eea}{\end{eqnarray}}
\newcommand{\beas}{\begin{eqnarray*}}
\newcommand{\eeas}{\end{eqnarray*}}

\newcommand{\ones} {{\bf 1}}

\newcommand{\ba}{\begin{array}}
\newcommand{\ea}{\end{array}}

\newcommand{\Real}{\mathbb{R}^}

\newcommand{\ac}{{\bf a}}
\newcommand{\lap}{\mathcal L}
\newcommand{\bE}{{\bf E}}
\newcommand{\bV}{{\bf V}}
\newcommand{\bG}{{\bf G}}
\newcommand{\bd}{{\bf d}}
\newcommand{\bfe}{{\bf e}}

\newcommand{\zero} {{\bf 0}}

\newcommand{\bbm}{\begin{bmatrix}}
\newcommand{\ebm}{\end{bmatrix}}

\title{\LARGE Spectrum of  Laplacians for Graphs with Self-Loops}
\author{\Large Beh\c cet\ A\c c\i kme\c se \\
\small Department of Aerospace Engineering and Engineering Mechanics \\
\small The University of Texas at Austin, USA}

\begin{document}
\maketitle
\begin{abstract}
This note introduces a result on the
location of eigenvalues, i.e., the spectrum, of the Laplacian for a
family of undirected graphs with self-loops. We extend on the known results for the spectrum of undirected graphs without self-loops or multiple edges. For this purpose, we introduce a new concept of pseudo-connected graphs and apply a lifting of the graph with self-loops to a graph without self-loops, which is then used to quantify the spectrum of the Laplacian for the graph with  self-loops.
\end{abstract}

\section{Introduction}
Graph theory has proven to be an extremely useful mathematical framework for many emerging engineering applications \cite{mehran_book,morse_03,Boyd07,mohar_97,murray_saber_04,ren_tac_05}.
This note introduces a  result, Theorem \ref{thm:bnd_loopG}, on the
location of eigenvalues, i.e., the spectrum, of the Laplacian for a
family of undirected graphs with self-loops. We extend on the known results for the spectrum of undirected graphs without self-loops  or multiple edges \cite{fiedler73,horn_99,RKChung_94}. For this purpose, we introduce a new concept of pseudo-connected graphs and apply a lifting of the graph with self-loops to a graph without self-loops, which is then used to quantify the spectrum of the Laplacian for the graph with  self-loops.

The primary motivation for this result is to analyze interconnected control systems that emerge from new engineering applications such as control of multiple vehicles or spacecraft \cite{roysmith_09,behcet_07ffs,acikmese2012markov,behcet_14markov,morse_03}. 
This result first appeared \cite{behcet_14decentralized}, where it proved key to extend the  stability proofs for observers \cite{behcet_05observer,behcet_11observers} to  decentralized observers. We believe that the result can also be useful in designing decentralized control \cite{Speyer79,Mesbahi08,Willsky82,Mutambara98,mehran_cdc05} and optimization algorithms.

\paragraph*{Notation:}
%
The following is a partial list of notation  (see the Appendix for the graph theoretic notation):
$\Real n$ is the $n$ dimensional real vector space;
$\|\cdot\|$ is the vector 2-norm;
$I$ is the identity matrix  and $I_m$ is the identity matrix in $\Real {m \times m}$; $\ones_m$ is a  vector of ones in $\Real m$; $\bfe_i$ is a vector with its $i$th entry $+1$ and the others zeros; $\sigma(A)$ is the set of eigenvalues of  $A$;  $\sigma_+(S)$ are the positive eigenvalues of  $S\!=\!S^T$;
$\rho(A)$ is the spectral radius of $A$.
%

\section{Main Result on Laplacians for Graphs with Self-Loops}
This section first summarizes  some known facts about graph theory, and then introduces  the main result of this note  on graphs with self-loops. For graphs with self-loops, we will introduce the concept of pseudo connectedness, which is useful in our developments.
\\
Let $\bG=(\bV,\bE)$ represents a finite graph with a set of vertices $\bV$ and edges $\bE$ with $(i,j) \in \bE$ denoting  an edge between the vertices $i$ and $j$.
$\lap (\bG)$ is the Laplacian matrix for the graph $\bG$; $\ac (\bG)$ is the algebraic connectivity of the graph $G$, which is the second smallest eigenvalue of $\lap (\bG)$.
%
$E$ is the vertex-edge adjacency matrix.
Each row of the vertex-edge adjacency matrix describes an edge between two vertices with
 entries corresponding to these vertices are $+1$ and $-1$ (it does not matter which entry is $+$ or $-$) and the rest of the entries are zeros. Note that if the edge described by a row is  a self-loop then there is only one non-zero entry with $+1$. Hence   a row  of $E_{i,k}$,  denoted by $\pi$, defining an edge between $p$'th and $q$'th vertices of the graph  has  its $j$th entry of $\pi_j$ as follows 
$$
 \pi_{j} =   \left\{\begin{array}{cc}1 &  j =p \\[-3pt]  -1^{(q-p)} &  j=q \\[-3pt]  0 & {\rm otherwise } \end{array}\right. .
$$
$\mathcal{A}$ is the adjacency matrix, and $\mathcal{D}$ is the diagonal matrix of node in-degrees  for $\bG$, then the following gives a relationship to compute the Laplacian matrix
\be
\lap(\bG) = E^T E = \mathcal{D} - \mathcal{A}.
\ee
The following relationships are well known in the literature  \cite{fiedler73} and \cite{horn_99} for a connected undirected graph $\bG$  with $N$ vertices and  without any {\em self-loops or multiple edges}
\begin{eqnarray}
\qquad \qquad \ac (\bG) &\geq & 2(1-\cos(\pi/N))
\label{eq:fiedler_bnd} \\
 \qquad  \qquad 2\bd(\bG) & \geq & \max(\sigma(\lap (\bG))),
\label{eq:fiedler_bnd2}
\end{eqnarray}
where $\bd(\bG)$ is the maximum in-degree of $\bG$. Indeed the inequality (\ref{eq:fiedler_bnd2}) is valid for any undirected graph without self-loops or multiple edges whether they are connected or not. Also, due to the connectedness of the graph, the minimum eigenvalue of the Laplacian matrix is $0$ with algebraic multiplicity of 1 and the eigenvector of $\ones$.
Next we characterize the location  of the Laplacian eigenvalues for a connected  undirected graph  $\bG$ with self-loops.  Having a  self-loop does not change whether a graph is connected or not, that is,  a graph with self-loops is connected if and only if the same graph with the self-loops removed is connected. Furthermore, we define the Laplacian of an undirected graph with at least one  self-loop as
\be
\mathcal{L}(\bG) = \mathcal{L}(\bG^o) + \sum_{(i,i) \in \bE} \bfe_i \bfe_i^T
\label{eq:L4loops}
\ee
where  $\bG^o$ is the largest subgraph of $\bG$ with the self-loops removed, and
\be
\mathcal{L}(\bG^o) = \sum_{(i,j)\in \bE,\, i \neq j} (\bfe_i-\bfe_j)(\bfe_i-\bfe_j)^T.
\label{eq:LwoutLoops}
\ee
The following definition introduces the concept of the pseudo-connected graphs, which is our fourth contribution.
\begin{definition} \label{def:pcon}
An undirected graph $\bG(\bV,\bE)$ without multiple edges  is {\em pseudo-connected} if every vertex is connected to itself and/or to another vertex  and if every connected subgraph of $\bG$ has at least one vertex with a self-loop.
\end{definition}
%

Next  we develop useful results on the eigenvalues of  undirected graphs with self-loops, which are  instrumental in the  stability analysis of the decentralized observer. We refer  to \cite{mehran_book} for a graph theoretic view  of multi-agent networks. 
\begin{lemma} \label{lem:Lpd}
The Laplacian of a pseudo-connected graph is positive definite.
\end{lemma}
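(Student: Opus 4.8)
The plan is to prove positive definiteness through the quadratic form: I would show that $x^{T}\mathcal{L}(\bG)x \geq 0$ for all $x$ and that it vanishes only at $x=\zero$; since $\mathcal{L}(\bG)$ is symmetric, this is exactly positive definiteness. First I would substitute the decomposition (\ref{eq:L4loops})--(\ref{eq:LwoutLoops}) into the quadratic form and use $(\bfe_i-\bfe_j)^{T}x = x_i-x_j$ and $\bfe_i^{T}x = x_i$ to obtain the sum-of-squares expression
\be
x^{T}\mathcal{L}(\bG)x = \sum_{(i,j)\in\bE,\, i\neq j}(x_i-x_j)^{2} + \sum_{(i,i)\in\bE}x_i^{2},
\ee
which is manifestly nonnegative. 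This already establishes $\mathcal{L}(\bG)\succeq 0$ as a preliminary step and reduces the problem to characterizing its kernel.

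The core of the argument is to analyze when the above sum equals zero. Because every summand is a perfect square, $x^{T}\mathcal{L}(\bG)x = 0$ forces each term to vanish separately, yielding two families of conditions: $x_i = x_j$ for every non-self-loop edge $(i,j)\in\bE$, and $x_i = 0$ for every vertex $i$ carrying a self-loop. The first family says precisely that $x$ is constant on each connected component of $\bG$. Here I would invoke the observation already made in the text, namely that removing self-loops does not change connectivity, so the components of $\bG$ coincide with those of $\bG^{o}$.

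Finally I would bring in pseudo-connectedness (Definition \ref{def:pcon}): each connected component contains at least one vertex with a self-loop, at which the second family of conditions gives $x_i = 0$; since $x$ is constant on that component, $x$ vanishes on the entire component. As every vertex lies in some component (the ``connected to itself and/or another vertex'' clause excludes stray vertices), it follows that $x = \zero$, hence $\mathcal{L}(\bG)\succ 0$. I expect the only genuine subtlety to be the bookkeeping in this last step: one must combine the two kernel conditions correctly component by component and read Definition \ref{def:pcon} as ``each connected \emph{component}'' rather than ``each connected subgraph'' taken literally, since the literal reading would force a self-loop at every single vertex. The sum-of-squares reduction itself is routine and poses no real difficulty.
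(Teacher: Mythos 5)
Your proof is correct, but it takes a genuinely different route from the paper's. The paper first partitions the pseudo-connected graph into connected subgraphs each containing a self-loop, and then, for each such subgraph, decomposes an arbitrary nonzero vector as $v = w + \zeta\ones$ with $w^T\ones = 0$; positivity of $v^T\mathcal{L}(\bG)v$ then follows by a case analysis ($w \neq \zero$ versus $w = \zero$, $\zeta \neq 0$), invoking as a black box the spectral fact that a connected graph without self-loops has $w^T\mathcal{L}(\bG^o)w > 0$ for nonzero $w \perp \ones$ (i.e., positive algebraic connectivity, Fiedler's result). You instead work globally: the sum-of-squares identity $x^T\mathcal{L}(\bG)x = \sum_{(i,j)\in\bE,\, i\neq j}(x_i-x_j)^2 + \sum_{(i,i)\in\bE}x_i^2$ gives semidefiniteness at once, and the kernel analysis (constancy on components forced by the edge terms, vanishing at self-loop vertices forced by the loop terms) yields definiteness using only the combinatorial meaning of connectivity, not any prior eigenvalue result. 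Your argument is thus more elementary and self-contained, treats single-vertex and multi-vertex components uniformly rather than as separate cases, and as a bonus exactly characterizes the kernel obstruction; the paper's argument is shorter if one is willing to cite the known kernel characterization of connected-graph Laplacians, and it connects the lemma explicitly to the literature it builds on. Your observation that Definition \ref{def:pcon} must be read as ``every connected \emph{component} has a self-loop'' (the literal ``every connected subgraph'' would force a loop at every vertex) is a fair point; the paper's own proof implicitly adopts the same component reading when it partitions the graph.
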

\begin{proof}
A pseudo-connected graph can be partitioned into subgraphs that are connected with at least one self-loop in each subgraph. Note that some of these subgraphs can have a single vertex that has a self-loop. Clearly each subgraph with a single vertex and a self-loop has Laplacian $1$. If we can also show that the connected subgraphs that have multiple vertices with at least one self-loop have positive definite Laplacians, then the  Laplacian of the overall graph will also be positive definite.  This will conclude the proof.
To do that we prove that a connected graph $\bG$ with at least one self-loop has a positive definite Laplacian. Let $\bG^o$ be the connected graph formed by removing the self-loops from $\bG$.
Any vector $v \! \neq \! \zero$, which  can be expressed as  $v \!=\!w \!+\! \zeta \ones$  where $w^T \ones \!=\!0$, and either or both $w \! \neq \! \zero$ and $\zeta\!\neq \!0$. 
Then, by using (\ref{eq:LwoutLoops}) $\mathcal{L}(\bG)  = \mathcal{L} (\bG^o) +Q_o$  where $Q_o\!:=\! \sum_{i=1}^q e_ie_i^T $ and $q$ is the number of self-loops and having $\ones^T Q_o \ones = q$,
$$
v^T \mathcal{L}(\bG) v \!=\! w^T \mathcal{L} (\bG^o) w  \!+\! w^T Q_o w \!+\! 2\zeta w^T Q_o \ones \!+\! q\xi^2 \geq 0.  
$$
If $w \!\neq \!\zero$, $w^T \mathcal{L} (\bG^o) w >0$ (due to connectedness of $\bG^o$), we have $v^T \mathcal{L}(\bG) v >0$. Next, if $w\!=\!0$ and $\zeta \!\neq\! 0$, then $v^T \mathcal{L}(\bG) v \!=\!  q \zeta^2 >0 $. Consequently $\mathcal{L}(\bG)\!=\! \mathcal{L}(\bG)^T \!\! >\! 0$, where $q$ is the number of self-loops.
\end{proof}
Next we introduce the concept of  {\em lifted graph} to characterize the eigenvalues of the Laplacian of a graph with self-loops.
\begin{definition}
Given an undirected  graph $\bG(\bE,\bV)$ with $N$ vertices and with at least one self-loop, its lifted graph $\hat{\bG}(\hat{\bE},\hat{\bV})$ is a graph with $2N+1$ vertices and with no self-loops such that (Figure \ref{fig:lifting}): For every vertex $i$ in $\bG$ there are vertices $i$ and $i\!+\!N\!+\!1$ in $\hat{\bG}$, $i=1,...,N$, and also a {\em middle vertex} $N\!+\!1$ with  the following  edges
\beas
 (i,j) \! \in \! \bE \, \Rightarrow (i,j) \in \hat{\bE} \   {\rm and}  \ (i+N+1,j+N+1) \! \in \! \hat{\bE}&& \\
 (i,i) \! \in \! \bE \, \Rightarrow (i,N+1) \in \hat{\bE}  \ {\rm and}  \ (N+1,i+N+1) \! \in \! \hat{\bE}.&&
\eeas
\end{definition}
\begin{figure}[ht]
\centering \fbox{\includegraphics[width=8.4cm,height=5.9cm]{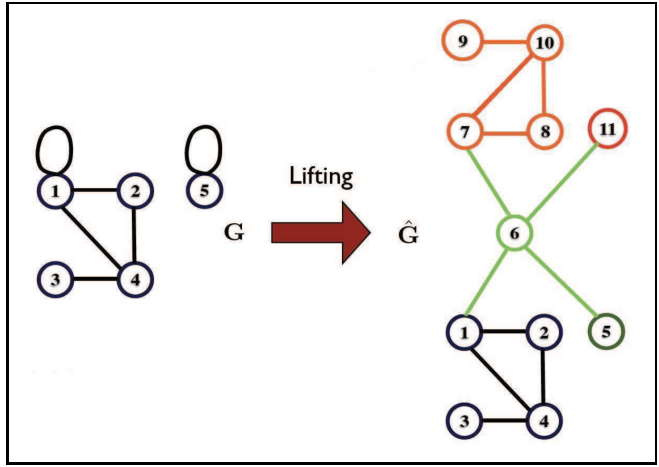}}
\caption{\small{Lifted graph of a pseudo-connected graph with self-loops.}}
\label{fig:lifting}
\end{figure}
The following theorem is the main result of this section on the eigenvalues of the Laplacians of pseudo-connected graphs.
\begin{theorem} \label{thm:bnd_loopG}
For a finite undirected graph, $\bG$, with self-loops but without multiple-edges:
\be \label{eq:eigGg}
\sigma\left(\mathcal{L}(\bG)\right) \subseteq \sigma \left(\mathcal{L}(\hat{\bG})\right) \cap [0,2\bd(\bG^o)\!+\! 1],
\ee
where $\bG^o(\bV,\bE^o)$ is  a subgraph of $\bG(\bV,\bE)$ where $\bE^o \subset \bE$ and $\bE^o$ contains all the edges of $\bE$ that are not self-loops.
Particularly, if $\bG$ is a pseudo-connected graph, then
\be \label{eq:eigG}
\sigma\left(\mathcal{L}(\bG)\right) \subseteq \sigma_+ \left(\mathcal{L}(\hat{\bG})\right) \cap [0,2\bd(\bG^o)\!+\!1].
\ee
\end{theorem}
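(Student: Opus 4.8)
The plan is to exploit the block structure that the lifting construction forces on $\mathcal{L}(\hat{\bG})$. I would order the $2N+1$ vertices of $\hat{\bG}$ as the first copy $1,\dots,N$, the middle vertex $N+1$, and the second copy $N+2,\dots,2N+1$. Let $s\in\Real N$ be the self-loop indicator ($s_i=1$ iff $(i,i)\in\bE$), so that $D_s:=\diag(s)=\sum_{(i,i)\in\bE}\bfe_i\bfe_i^T$ is the matrix $Q_o$ of Lemma \ref{lem:Lpd} and $q=\ones^T s$ is the number of self-loops. Reading off the edge rules of the lifting, each non-self-loop edge $(i,j)$ contributes $(\bfe_i-\bfe_j)(\bfe_i-\bfe_j)^T$ to \emph{both} diagonal copies and nothing across copies, while each self-loop $(i,i)$ becomes the two edges $(i,N+1)$ and $(N+1,i+N+1)$. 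Accumulating these using (\ref{eq:L4loops})--(\ref{eq:LwoutLoops}) gives
\be
\mathcal{L}(\hat{\bG}) = \begin{bmatrix} \mathcal{L}(\bG^o)+D_s & -s & 0 \\ -s^T & 2q & -s^T \\ 0 & -s & \mathcal{L}(\bG^o)+D_s \end{bmatrix},
\ee
where the crucial point is that each diagonal block equals $\mathcal{L}(\bG^o)+D_s=\mathcal{L}(\bG)$ exactly.

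Next I would prove $\sigma(\mathcal{L}(\bG))\subseteq\sigma(\mathcal{L}(\hat{\bG}))$ by an antisymmetric embedding. If $\mathcal{L}(\bG)v=\lambda v$ with $v\neq\zero$, I set $\hat v:=(v^T,0,-v^T)^T\in\Real{2N+1}$. A direct block multiplication shows the middle row yields $-s^Tv-s^T(-v)=0$, while the two copy-rows give $\mathcal{L}(\bG)v=\lambda v$ and $\mathcal{L}(\bG)(-v)=-\lambda v$; hence $\mathcal{L}(\hat{\bG})\hat v=\lambda\hat v$ with $\hat v\neq\zero$. Thus every eigenvalue of $\mathcal{L}(\bG)$ is an eigenvalue of $\mathcal{L}(\hat{\bG})$, which supplies the first factor of the intersection.

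Then I would establish the interval bound. Since $\mathcal{L}(\bG)=\mathcal{L}(\bG^o)+D_s$ is a sum of positive semidefinite matrices, it is positive semidefinite, giving the lower endpoint $0$. For the upper endpoint, $D_s$ is a $0$--$1$ diagonal matrix so $\max\sigma(D_s)\le 1$, and applying the known bound (\ref{eq:fiedler_bnd2}) to the self-loop-free graph $\bG^o$ gives $\max\sigma(\mathcal{L}(\bG^o))\le 2\bd(\bG^o)$. Subadditivity of the largest eigenvalue (Weyl) then yields $\max\sigma(\mathcal{L}(\bG))\le 2\bd(\bG^o)+1$, so $\sigma(\mathcal{L}(\bG))\subseteq[0,2\bd(\bG^o)+1]$. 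Intersecting with the containment of the previous step proves (\ref{eq:eigGg}).

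For the pseudo-connected refinement (\ref{eq:eigG}) it remains only to upgrade $\sigma(\mathcal{L}(\hat{\bG}))$ to $\sigma_+(\mathcal{L}(\hat{\bG}))$, and this is immediate from Lemma \ref{lem:Lpd}: if $\bG$ is pseudo-connected then $\mathcal{L}(\bG)>0$, so every $\lambda\in\sigma(\mathcal{L}(\bG))$ is strictly positive, and since each such $\lambda$ is also an eigenvalue of the symmetric matrix $\mathcal{L}(\hat{\bG})$, it lies in $\sigma_+(\mathcal{L}(\hat{\bG}))$. The step I expect to demand the most care is verifying the block decomposition above and, in particular, confirming that the diagonal blocks reproduce $\mathcal{L}(\bG)$ rather than merely $\mathcal{L}(\bG^o)$ — the extra $D_s$ appears precisely because each lifted self-loop adds one to the degree of its vertex in each copy. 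Once this identity is pinned down, everything else is a short verification.
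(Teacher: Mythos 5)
Your proof is correct and takes essentially the same route as the paper: the same block decomposition of $\mathcal{L}(\hat{\bG})$ (the paper obtains it via the edge--vertex adjacency matrices, where its $S^TS$ is exactly your $D_s$), the same antisymmetric eigenvector $\left(v^T,0,-v^T\right)^T$ to embed $\sigma(\mathcal{L}(\bG))$ into $\sigma(\mathcal{L}(\hat{\bG}))$, the same bound $\mathcal{L}(\bG)\le\mathcal{L}(\bG^o)+I$ combined with (\ref{eq:fiedler_bnd2}), and the same appeal to Lemma \ref{lem:Lpd} for the pseudo-connected refinement. Incidentally, your block matrix (off-diagonal blocks $-s$, middle entry $2q$) is the properly signed Laplacian, whereas the paper's displayed matrix has $+S^T\ones$ off the diagonal and $2N$ in the middle; this discrepancy is immaterial to the argument, since the middle row annihilates $\left(v^T,0,-v^T\right)^T$ either way.
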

\begin{proof}
Consider the edge-vertex adjacency matrix   $E^o$ for $\bG^o$. We have the following relationship for the vertex adjacency matrices of $\bG$ and $\hat{\bG}$, $E$ and $\hat{E}$,  in terms of   $E^o$ 
$$
\hat{E} = \left[\begin{array}{ccc}
E^o & 0 & 0 \\
S & \ones & 0 \\0 & \ones & S \\0 & 0 & E^o \end{array}\right], \quad  E = \left[\begin{array}{c} E^o \\ S \end{array}\right]
$$
where the  matrix $S$ has  entries of $+1$ or $0$.  This implies that
$$
\mathcal{L}(\hat{G}) = \left[\begin{array}{ccc}
{E^o}^TE^o+S^TS  & S^T \ones & 0 \\ \ones^T S  & 2 N & \ones^T S \\0 & \ \, S^T \ones  & {E^o}^TE^o + S^TS\end{array}\right]
$$
and $\mathcal{L}(\bG) = {E^o}^TE^o + S^T S$.
Now suppose that $\psi \in \sigma\left(\mathcal{L}(\bG)\right)$ with the corresponding eigenvector $v$.
Then
$$
\mathcal{L}(\hat{\bG}) \left[\begin{array}{c}v \\0 \\ -v \end{array}\right] =  \left[\begin{array}{c}
\mathcal{L}(\bG) v \\
0 \\
-\mathcal{L}(\bG) v\end{array}\right] = \psi \left[\begin{array}{c}v \\0 \\ -v \end{array}\right].
$$
Consequently $\psi \in \sigma \left(\mathcal{L}(\hat{\bG}) \right)$ too.
Next note that $0 \leq S^TS \leq I$, which implies that $\mathcal{L}(\bG) \leq \mathcal{L}(\bG^o) + I $.  This implies that
\be
\max(\sigma \left( \mathcal{L}(\bG) \right)) \leq \max(\sigma (\mathcal{L}(\bG^o)))\! +\! 1\! \leq\! 2 \bd(\bG^o)+1
\ee
which follows from (\ref{eq:fiedler_bnd2}). This proves the relationship given by (\ref{eq:eigGg}). Now by using Lemma \ref{lem:Lpd}, the relationship given by (\ref{eq:eigG}) directly follows from (\ref{eq:eigGg}).
\end{proof}

\bibliographystyle{unsrt}
\bibliography{gr_selfloop}
\end{document}